
\documentclass[letterpaper, 10 pt, conference]{ieeeconf}  

\IEEEoverridecommandlockouts                              
\overrideIEEEmargins

\usepackage{ntheorem}
\usepackage{hyperref}
\usepackage{textcomp}

\usepackage{enumitem}
\usepackage{graphicx}
\usepackage{epstopdf}
\usepackage{float}
\usepackage[nocomma]{optidef}
\usepackage{dsfont}
\usepackage{times}
\usepackage{bm}
\usepackage{fancyhdr,graphicx,amsmath,amssymb}
\usepackage[noend]{algpseudocode}
\usepackage{bbm}
\usepackage{enumerate}
\usepackage{graphicx}
\usepackage{multirow}
\usepackage{caption}
\usepackage{subcaption}

\usepackage{adjustbox}



\usepackage{pgfplotstable}
\usepackage{pgfplots}
\pgfplotsset{
	table/search path={plot_figures},
}

\usepackage[percent]{overpic}

\usepackage{epsfig} 

\newtheorem{theorem}{Theorem}
\newtheorem{lemma}[theorem]{Lemma}
\newtheorem{proposition}[theorem]{Proposition}

\newtheorem{remark}{Remark}
\usepackage{booktabs}


\usepackage{pgfplotstable}
\usepackage{pgfplots}
\usepackage{cite}
\usepackage{algorithm}
\usepackage{algpseudocode}
\pgfplotsset{
	table/search path={plot_figures},
}

\allowdisplaybreaks

\title{\LARGE \bf
 On Acceleration of Gradient-Based Empirical Risk Minimization using Local Polynomial Regression
}

\author{Ekaterina Trimbach, Edward Duc Hien Nguyen, and C\'esar A. Uribe   
	\thanks{ET (\textit{trimbachkatya@gmail.com}) is with the École Polytechnique Fédérale de Lausanne and Moscow Institute of Physics and Technology; EN and CAU (\textit{\{en18,cauribe\}@rice.edu}) are with the Department of Electrical and Computer Engineering, Rice University, Houston, TX. EN is supported by a training fellowship from the Gulf Coast Consortia, on the NLM Training Program in Biomedical Informatics \& Data Science~(T15LM007093). This work was partially funded by \mbox{ARPA-H} Strategic Initiative Seed Fund~\#916012.}%
}

\begin{document}
	
	\maketitle
	\thispagestyle{empty}
	\pagestyle{empty}
	
	\begin{abstract} We study the acceleration of the Local Polynomial Interpolation-based Gradient Descent method (LPI-GD) recently proposed for the approximate solution of empirical risk minimization problems (ERM). We focus on loss functions that are strongly convex and smooth with condition number $\sigma$. We additionally assume the loss function is $\eta$-Hölder continuous with respect to the data. The oracle complexity of LPI-GD is $\tilde{O}\left(\sigma m^d \log(1/\varepsilon)\right)$ for a desired accuracy $\varepsilon$, where $d$ is the dimension of the parameter space, and $m$ is the cardinality of an approximation grid. The factor $m^d$ can be shown to scale as $O((1/\varepsilon)^{d/2\eta})$. LPI-GD has been shown to have better oracle complexity than gradient descent (GD) and stochastic gradient descent (SGD) for certain parameter regimes. We propose two accelerated methods for the ERM problem based on LPI-GD and show an oracle complexity of $\tilde{O}\left(\sqrt{\sigma} m^d \log(1/\varepsilon)\right)$. Moreover, we provide the first empirical study on local polynomial interpolation-based gradient methods and corroborate that LPI-GD has better performance than GD and SGD in some scenarios, and the proposed methods achieve acceleration. 
	\end{abstract}
	
	\section{Introduction}\label{sec:intro}
	
	Empirical Risk Minimization (ERM) has become a central component in the modeling of control and machine learning problems, e.g., classification, regression, inference~\cite{lugosi1995nonparametric,vapnik1992principles,chaudhuri2011differentially,donini2018empirical}. Alongside strong and broad modeling capabilities, ERM has available efficient tools with provable performance guarantees for the computation of its approximate solutions, for example, gradient descent (GD), stochastic gradient descent (SGD) and variations~\cite{nemirovski2009robust,dekel2012optimal,johnson2013accelerating,Nesterov_first_order_method}. 
	
	The ERM problem can be described as follows. For \mbox{$d, n \in \mathbb{N}$}, assume that there are $n$ samples of $d$-dimensional data points in denoted as $\mathcal{D}=\{x^{(i)} \in \left[h^{\prime}, 1-h^{\prime}\right]^{d}: i \in[n]\}$,  where $h^{\prime}>0$ is the range of each feature, and each train sample belongs to the hypercube $\left[h^{\prime}, 1-h^{\prime}\right]^{d}$. Our goal is to find a solution of the following optimization problem:
\begin{equation}\label{eq:gen_erm_problem}
F^{\star} = \min _{\theta \in \mathbb{R}^{p}} \left[ F(\theta)  \triangleq \frac{1}{n} \sum_{i=1}^{n} f\big(x^{(i)} ; \theta\big) \right]
\end{equation}
where $f: \mathbb{R}^{p} \rightarrow \mathbb{R}$ is the loss function. \textit{We are interested in finding approximate solutions $\hat \theta$ such that \mbox{$F(\hat \theta) - F^{\star} \leq \varepsilon$}, for some desired accuracy $\varepsilon>0$.}
	
	First-order methods such as gradient descent achieve linear convergence rates dependent on $\sigma$. However, their oracle complexity (i.e., the number of computations of gradients) can become prohibitively expensive when $n$ is large. For example, for a strongly convex and smooth loss function, gradient descent requires $O(n\log(1/\varepsilon))$ oracle calls to reach the desired accuracy $\varepsilon>0$. Intuitively, at every iteration, one needs to evaluate the gradient of the loss function on the $n$ data points. In contrast, like SGD, stochastic variants evaluate the gradient on a single randomly selected point, i.e., only one oracle call. While the number of gradient computations is reduced, this comes with a slower convergence rate, usually $O(1/t)$, where $t$ is the number of iterations, i.e., an oracle complexity of $O(1/\varepsilon)$. The complexity advantages of SGD become clear when considering statistical precision of the order $n^{-1/2}$ for large~$n$.
	
	The geometry of the loss function plays a crucial role in the iteration complexity of GD and SGD. If the loss function is assumed to be strongly convex and smooth, the iteration complexity depends on the condition number given by the geometry of the function. Traditionally, the geometry of the function is defined with respect to the optimization variable $\theta$. Recently in~\cite{LPI_GD_alg}, the authors proposed, for the first time, to exploit the smoothness of the loss function with respect to the data $x^{(i)}$. In~\cite{LPI_GD_alg}, the authors showed that under appropriate conditions, the smoothness of the loss function with respect to the data could lead to better oracle complexity of first-order methods for ERM problems. The main result in~\cite{LPI_GD_alg} shows regimes of the ERM problem where the proposed Gradient-Based Empirical Risk Minimization using Local Polynomial Regression (LPI-GD) achieves an oracle complexity of $O(n^\delta)$, for a small tunable parameter $\delta$. This complexity is provably smaller than the oracle complexity of SGD (i.e., $O(n^{1/2})$) when considering a statistical precision of $\varepsilon = \Theta(n^{-1/2})$.
	
	\begin{table}[t]
\centering
\caption{\textbf{Oracle complexity of GD, SGD, and LPI-GD~\cite{LPI_GD_alg}.} On upper bound on the oracle complexity of GD, SGD, and LPI-GD first-order methods with a desired accuracy $\varepsilon = \Theta(n^{-1/2})$, parameter dimension $p=O(\text{poly}(n))$, and data dimension $d = O(\log \log (n))$.}
  \begin{adjustbox}{max width=1\textwidth}
\begin{tabular}{  c  c  } 
 \toprule
 \textbf{ Algorithm }  & \textbf{Oracle Complexity} \\
 \midrule
  GD & $O(n\log(n))$ \\
  SGD & $O(\sqrt{n})$ \\
  LPI-GD & $O(n^\delta)$ \\
 \bottomrule
\end{tabular}
\end{adjustbox}
\label{table:complexities}
\end{table}

Table~\ref{table:complexities} shows the oracle complexity of LPI-GD compared to SGD and GD. LPI-GD has better oracle complexity under appropriate assumptions and parameter regimes~\cite{LPI_GD_alg}, i.e., small dimension and large smoothness with respect to the data. This sets an initial theoretical observation about the use of smoothness with respect to the data to reduce the oracle complexity of first-order methods. 

\textit{The main contribution of this paper} is to build upon the local polynomial regression technique for gradient interpolation and propose two accelerated methods that further reduce the oracle complexity of finding approximate solutions to ERM problems for smooth and strongly convex losses. The main contributions of this paper are threefold:
\begin{itemize}
    \item We propose a Catalyst-based algorithm that provably accelerates LPI-GD using the desired accuracy for the gradient interpolation prescribed by LPI-GD.
    \item We propose an accelerated method based on inexact oracle analysis that accelerates LPI-GD and describes the required accuracy of the gradient interpolation.
    \item We provide the first empirical study of LPI-GD and compare its performance with the two proposed accelerated variants.
\end{itemize}

This paper is organized as follows. Section~\ref{sec:assumpt} recalls the local polynomial interpolation and the LPI-GD method. Sections~\ref{sec:algo_catalyst}, and~\ref{sec:algo} present the Catalyst-based and inexact oracle-based accelerated methods respectively. Sections~\ref{sec:num}, and~\ref{sec:diss} present the preliminary numerical analysis and discussion. Section~\ref{sec:conclusion} finalizes with conclusions and future work.

\noindent\textbf{Notation:} We denote  $[n] \triangleq\{1, \ldots, n\}$ for all $n \in \mathbb{N}$. For any $d$-tuple  non-negative integers $s = (s_1, s_2, ..., s_d)$ denote $|s|=s_{1}+\cdots+s_{d}, s !=s_{1} ! \cdots s_{d} !, $
$\text { and } x^{s}=x_{1}^{s_{1}} \cdots x_{d}^{s_{d}} \text { for any } x \in \mathbb{R}^{d}$.
 
\section{Preliminaries}\label{sec:assumpt}

This section states the assumptions on the function classes that we will use for our analysis. Moreover, we will recall the LPI-GD algorithm and its theoretical performance.

\subsection{Assumptions}\label{sec:assum}
\subsubsection{\textbf{Smoothness in the parameter}}\label{as:1}
For any $\theta_{1}, \theta_{2} \in \mathbb{R}^{p}$ there exists $L_{1}>0$ such that for any fixed $x \in[0,1]^{d}$, the gradient $\nabla_{\theta} f(x ; \cdot): \mathbb{R}^{p} \rightarrow \mathbb{R}^{p}$ exists and is $L_{1}$-Lipschitz continuous as a function of the parameter vector:
\begin{align}\label{eq:L_1_def}
\left\|\nabla_{\theta} f\left(x ; \theta_{1}\right)-\nabla_{\theta} f\left(x ; \theta_{2}\right)\right\|_{2} \leq L_{1}\left\|\theta_{1}-\theta_{2}\right\|_{2}.
\end{align}
\subsubsection{\textbf{Smoothness in the data}}\label{as:2}
For any $y_{1}, y_{2} \in[0,1]^{d}$ there exist $\eta>0$ and $L_{2}>0$ such that $ \forall \vartheta \in \mathbb{R}^{p}$ and  $\forall i \in[p]$, the $i$'th partial derivative with respect to $\theta_{i}$ denoted $g_{i}(\cdot ; \vartheta) \triangleq$ $\frac{\partial f}{\partial \theta_{i}}(\cdot ; \vartheta):[0,1]^{d} \rightarrow \mathbb{R}$, belongs to the $\left(\eta, L_{2}\right)$-Hölder class as a function of the data vector $x$, i.e.,
$$
 \quad\left|\nabla^{s} g_{i}\left(y_{1} ; \vartheta\right)-\nabla^{s} g_{i}\left(y_{2} ; \vartheta\right)\right| \leq L_{2}\left\|y_{1}-y_{2}\right\|_{1}^{\eta-l},
$$
where $l=\lceil\eta\rceil-1$\\
\subsubsection{\textbf{Strong convexity}}\label{as:3}
For all $\theta_{1}, \theta_{2} \in \mathbb{R}^{p}$ there exists \mbox{$\mu>0$} such that for all fixed $x \in[0,1]^{d}$, the map $f(x ; \cdot): \mathbb{R}^{p} \rightarrow$ $\mathbb{R}$ is $\mu$-strongly convex, i.e.,
$$
f\left(x ; \theta_{1}\right) \geq f\left(x ; \theta_{2}\right)+\nabla_{\theta} f\left(x ; \theta_{2}\right)^{\mathrm{T}}\left(\theta_{1}-\theta_{2}\right)+\frac{\mu}{2}\left\|\theta_{1}-\theta_{2}\right\|_{2}^{2}.
$$

Additionally, we define the condition number of the function $f$ as  $\sigma \triangleq L_{1} / \mu$.

{Assumptions \ref{as:1}, \ref{as:2}, \ref{as:3} are standard for convex optimization methods~\cite{nesterov2018lectures}. Classically,  optimization methods require Lipschitz continuous or continuously differentiable properties, but due to the fact that this Hölder class includes both Lipschitz continuous and continuously differentiable functions, \ref{as:2} allows deal with a larger function class.}

\subsection{LPI-GD Algorithm}

\begin{algorithm}[t!]
\caption{Local Polynomial Interpolation based Gradient Descent (LPI-GD)~\cite{LPI_GD_alg}.}\label{alg:LPI_GD}
\begin{algorithmic}
\Require $\theta^{0}$ (initial guess); $K$ (number of iterations); $\varepsilon>0$ (desired accuracy), $\{\delta_t \in (0,1)\}$ for $t \in T$, a sequence of allowable supremum norm approximation.
\For{$k = 1, \ldots, K$}
\State \textbf{1.} Compute $\widehat{\nabla F}^{(k)}(\theta_{k - 1})$ with accuracy $\delta_t$. 
\State \textbf{2.} Update: $\theta^{(k)} \leftarrow \theta^{(k-1)}-\frac{1}{L_{1}} \widehat{\nabla F}^{(k)}(\theta_{k - 1})$
\EndFor
\Return Output: $\theta_{K}$ (final estimate).
\end{algorithmic}
\label{algo:LPI-GD}
\end{algorithm}

LPI-GD follows the same structure as gradient descent, where at each iteration, the parameters are updated in the negative direction of the gradient, see Algorithm~\ref{algo:LPI-GD}. However, the critical difference is that LPI-GD builds an approximate gradient $\widehat{\nabla F}^{(k)}(\cdot)$ at each iteration $k$ using local polynomial regressions and exploiting the smoothness of the loss function with respect to the data.

The approximate gradient is computed as follows:
\begin{align} \label{def:approx_grad_F}
    \widehat{\nabla F}^{(k)} (\theta^{(k-1)})=\frac{1}{n} \sum_{j=1}^{n}\left[\hat{\phi}_{i}^{(k)}\big(x^{(j)}\big): i \in[p]\right]^{\mathrm{T}},
\end{align}
where  $x \in\left[h^{\prime}, 1-h^{\prime}\right]^{d}$, and $\hat{\phi}_{i}^{(k)}$ is a coordinate-wise approximator defined as
\begin{align}\label{def:hat_fi_i}
 \hat{\phi}_{i}^{(k)}(x)=\sum_{y \in \mathcal{G}_{m_{t}}} g_{i}\big(y ; \theta^{(k-1)}\big) w_{y, k}^{\star}(x),
\end{align}
with $g_{i}\left(\cdot ; \theta^{(k-1)}\right)=\frac{\partial f}{\partial \theta_{i}}\left(\cdot ; \theta^{(k-1)}\right):\left[h^{\prime}, 1-h^{\prime}\right]^{d} \rightarrow \mathbb{R}$ is the partial derivative and $w^{\star}$ is a set of interpolation weights. Importantly, the coordinate approximation functions are computed as weighted interpolations defined on an uniform grid
\begin{align*}
    \mathcal{G}_m \triangleq \{ u \in [0,1]^d: \forall i\in [d],u_im \in m \},
\end{align*}
which has cardinality $|\mathcal{G}_m| = m^d$. The weights $w^{\star}$ depend on the weight vectors $w_{y}(x)$ defined as
\begin{align*}
  &  \forall y \in \mathcal{G}_{m}, \forall x \in[h, 1-h]^{d}, w_{y}^{\star}(x)=\left[w_{y}(x)\right]_{1} \\
& w_{y}(x) =\frac{1}{(m h)^{d}}\left(\prod_{j=1}^{d} K\left(\frac{y_{j}-x_{j}}{h}\right)\right) B(x)^{-1} U\left(\frac{y-x}{h}\right).
\end{align*}
Interpolation is based on a kernel function $K(x)$, where, for any $x \in[h, 1-h]^{d}$, the symmetric matrix $B(x)$ is
\begin{align*}
&B(x) \triangleq\\
&\frac{1}{(m h)^{d}} \sum_{y \in \mathcal{G}_{m}} U\left(\frac{y-x}{h}\right) U\left(\frac{y-x}{h}\right)^{\mathrm{T}} \prod_{j=1}^{d} K\left(\frac{y_{j}-x_{j}}{h}\right),
\end{align*}
and, the vector function $U(u)$ is defined as
\begin{align*}
\forall u \in \mathbb{R}^{d}, \quad U(u) \triangleq\left[\frac{u^{s}}{s !}: s \in \mathbb{Z}_{+}^{d},|s| \leq l\right]^{\mathrm{T}}.
\end{align*}

For more details on this specific local polynomial regression see~\cite[Section 2.1]{LPI_GD_alg}. The key implication of the interpolation formulation described above is shown in the next informal theorem.
\begin{theorem}[Theorem 2.2 in~\cite{LPI_GD_alg}, simplified]\label{theo:inter}
    For any constant $\delta \in (0,1)$, under the smoothness and convexity assumptions presented in Section~\ref{sec:assumpt}, there exists a grid cardinality parameter $m >0$ such that the supremum norm between the local polynomial interpolator $\hat\phi(x)$ and the gradient $g(x)$ at a point $x$ is bounded by $\delta$, i.e.,
    \begin{align*}
        \sup_{x \in [h,1-h]^d} |\hat\phi(x) - g(x)| \leq \delta.
    \end{align*}
\end{theorem}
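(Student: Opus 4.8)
The plan is to follow the classical bias analysis of local polynomial regression, exploiting that the weights $w_y^\star(x)$ are constructed precisely so that the interpolator reproduces polynomials of degree at most $l = \lceil \eta \rceil - 1$ exactly. First I would establish the \emph{polynomial reproduction property}: for any multivariate polynomial $P$ of degree at most $l$, one has $\sum_{y \in \mathcal{G}_m} P(y)\, w_y^\star(x) = P(x)$. This follows directly from the definition of $w_y(x)$ through $B(x)^{-1} U\big((y-x)/h\big)$ together with the monomial structure of $U$: the matrix $B(x)$ is exactly the kernel-weighted Gram matrix of the monomials $U$ on the shifted grid, and selecting the first coordinate $[w_y(x)]_1$ extracts the value at $x$ of the weighted least-squares polynomial fit, which is unbiased on the polynomial space spanned by $U$.

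Next, I would fix a coordinate $i$ and a point $x$, and Taylor-expand $g_i(\cdot\,;\vartheta)$ about $x$ to order $l$, writing $g_i(y;\vartheta) = T_x(y) + R_x(y)$, where $T_x(y) = \sum_{|s|\le l} \frac{(y-x)^s}{s!}\nabla^s g_i(x;\vartheta)$ is a polynomial of degree at most $l$ in $y$ with $T_x(x) = g_i(x;\vartheta)$, and $R_x$ is the Taylor remainder. Applying the reproduction property to $T_x$ gives $\sum_y T_x(y)\, w_y^\star(x) = T_x(x) = g_i(x;\vartheta)$, so the interpolation error collapses to $\hat\phi_i(x) - g_i(x;\vartheta) = \sum_{y\in\mathcal{G}_m} R_x(y)\, w_y^\star(x)$. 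This is the central simplification: reproduction annihilates everything except the remainder.

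Assumption~\ref{as:2} is then used to bound the remainder. Writing $R_x$ in the mean-value form involving the difference of the top-order derivatives $\nabla^s g_i$ ($|s|=l$) evaluated at an intermediate point and at $x$, the $(\eta,L_2)$-Hölder bound yields $|R_x(y)| \le C L_2 \|y-x\|_1^{\eta}$ for a combinatorial constant $C$ depending only on $d$ and $l$. Since the kernel $K$ is concentrated on a window of width $O(h)$, only grid points with $\|y-x\|_1 = O(h)$ carry nonzero weight, so $|R_x(y)|\,\mathbf{1}[w_y^\star(x)\neq 0] \le C L_2 (C' h)^{\eta}$. Combining this with a bound $\sum_y |w_y^\star(x)| \le C''$ gives the uniform estimate $\sup_{x} |\hat\phi_i(x) - g_i(x)| \le C''' L_2\, h^{\eta}$, and since $h$ decreases as the grid resolution $m$ grows, choosing $m$ large enough that $C''' L_2 h^{\eta} \le \delta$ finishes the argument (the bound being uniform over coordinates $i$ then yields the simplified statement).

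The main obstacle I anticipate is not the reproduction identity or the Taylor estimate, but establishing the weight bound $\sum_y |w_y^\star(x)| \le C''$ \emph{uniformly} over $x \in [h,1-h]^d$. This requires showing that the Gram matrix $B(x)$ is uniformly well-conditioned, i.e., its smallest eigenvalue is bounded away from zero independently of $x$ and $m$, so that $\|B(x)^{-1}\|$ stays bounded even near the boundary of the hypercube where the local window is truncated. Controlling this quantity is where the structural properties of the kernel $K$ (symmetry, positivity, bounded support) and the uniformity of the grid $\mathcal{G}_m$ are essential, and it is the technical heart of the estimate.
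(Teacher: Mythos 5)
This statement is recalled in the paper only as a simplified citation of Theorem~2.2 of~\cite{LPI_GD_alg}; the paper itself contains no proof of it, so there is no in-paper argument to compare against. Your proposal is the classical bias analysis for local polynomial estimators --- polynomial reproduction by the weights $w_y^\star$, Taylor expansion to order $l=\lceil\eta\rceil-1$, H\"older control of the remainder via Assumption~\ref{as:2}, and a uniform bound on $\sum_{y}|w_y^\star(x)|$ --- which is essentially the route taken in the cited source (itself following the standard treatment of LP($l$) estimators in nonparametric regression), so the approach is correct. Two points to make explicit to close the argument: first, the uniform lower bound on the smallest eigenvalue of $B(x)$ that you rightly identify as the technical heart is, in the cited work, secured by explicit structural assumptions on the kernel (bounded support, bounded above and below on its support) combined with the uniformity of the grid, rather than derived from scratch; second, the bandwidth $h$ and the grid resolution $m$ are distinct parameters that must be coupled --- one chooses $h$ of order $(\delta/L_2)^{1/\eta}$ to make the bias at most $\delta$, and then $m$ large enough relative to $1/h$ so that each local window contains enough grid points for $B(x)$ to remain well conditioned, which is how the ``grid cardinality parameter $m$'' of the statement (and the later estimate $m^d = O((1/\varepsilon)^{d/(2\eta)})$) actually arises.
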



Under the local polynomial regression described above and Theorem~\ref{theo:inter}, Algorithm~\ref{alg:LPI_GD} is shown to have the following iteration complexity:
\begin{proposition}[Proposition 2.3 in
~\cite{LPI_GD_alg}, simplified]\label{prop:iter}
 The iteration complexity for Algorithm~\ref{alg:LPI_GD} for accuracy $\varepsilon>0$ and start point $\theta^{(0)}$ is :
 \begin{align*}
     K=\left[\sigma \log \left(\left({F\left(\theta^{(0)}\right)-F_{*}+\frac{p}{2 \mu}}\right)\big/{\varepsilon}\right)\right],
 \end{align*}
 with a maximum approximation error \mbox{$\delta_k^2  = \Theta(\varepsilon/p)$}. 
\end{proposition}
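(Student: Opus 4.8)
The plan is to treat Algorithm~\ref{alg:LPI_GD} as inexact gradient descent and run the standard strongly-convex/smooth analysis while carefully tracking the interpolation error. First I would convert the coordinate-wise sup-norm guarantee of Theorem~\ref{theo:inter} into a bound on the full gradient error. Writing $e_k \triangleq \widehat{\nabla F}^{(k)}(\theta^{(k-1)}) - \nabla F(\theta^{(k-1)})$ and using the definition of the approximate gradient in~\eqref{def:approx_grad_F}, each coordinate of $e_k$ is an empirical average over the $n$ data points of differences $\hat\phi_i^{(k)}(x^{(j)}) - g_i(x^{(j)};\theta^{(k-1)})$, each bounded by $\delta_k$ via Theorem~\ref{theo:inter}. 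The triangle inequality preserves this per-coordinate bound under averaging, so summing over the $p$ coordinates gives
\begin{align*}
\norm{e_k}_2^2 \le p\,\delta_k^2 .
\end{align*}
This is the step where the dimension $p$ enters, and it is what ultimately forces $\delta_k^2 = \Theta(\varepsilon/p)$.

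Next I would establish a one-step contraction. Applying the descent lemma for the $L_1$-smooth $F$ to the update $\theta^{(k)} = \theta^{(k-1)} - \tfrac{1}{L_1}\big(\nabla F(\theta^{(k-1)}) + e_k\big)$ and expanding, the cross terms between $\nabla F(\theta^{(k-1)})$ and $e_k$ cancel exactly for the step size $1/L_1$, leaving
\begin{align*}
F(\theta^{(k)}) \le F(\theta^{(k-1)}) - \tfrac{1}{2L_1}\norm{\nabla F(\theta^{(k-1)})}_2^2 + \tfrac{1}{2L_1}\norm{e_k}_2^2 .
\end{align*}
Strong convexity (Assumption~\ref{as:3}) yields the Polyak--\L ojasiewicz inequality $\norm{\nabla F(\theta)}_2^2 \ge 2\mu\big(F(\theta)-F^\star\big)$, which I substitute together with the error bound above. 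Setting $a_k \triangleq F(\theta^{(k)}) - F^\star$ and $\rho \triangleq 1 - 1/\sigma$ (recall $\sigma = L_1/\mu$), this produces the scalar recursion
\begin{align*}
a_k \le \rho\, a_{k-1} + \tfrac{p\,\delta_k^2}{2L_1} .
\end{align*}

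Finally I would unroll the recursion. Taking $\delta_k \equiv \delta$ and summing the geometric series gives $a_K \le \rho^K a_0 + \tfrac{p\delta^2}{2L_1}\cdot\tfrac{1}{1-\rho} = \rho^K a_0 + \tfrac{p\delta^2}{2\mu}$. Choosing $\delta^2 = \rho^K$ (admissible since $\rho \in (0,1)$, and well defined because the $K$ below depends only on $\varepsilon,\sigma,p,\mu$ and $a_0$) collapses both terms into $a_K \le \rho^K\big(a_0 + \tfrac{p}{2\mu}\big)$. Requiring $a_K \le \varepsilon$ and using $-\log(1-1/\sigma) \ge 1/\sigma$ gives the stated $K=\lceil \sigma\log((F(\theta^{(0)})-F^\star + \tfrac{p}{2\mu})/\varepsilon)\rceil$, and substituting this $K$ back shows $\delta^2 = \rho^K = \Theta(\varepsilon/p)$, matching the prescribed maximum approximation error. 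The main obstacle is the bookkeeping of the accumulated inexactness: one must verify that the sup-norm guarantee really does yield the clean bound $\norm{e_k}_2 \le \sqrt{p}\,\delta_k$ (so that per-coordinate accuracy $\delta_k$ suffices), and then balance the geometric decay against the noise residual so that the iterate contracts at rate $\rho$ rather than saturating at a noise floor. The choice $\delta^2 = \rho^K$ is precisely what achieves this and explains the additive $p/(2\mu)$ inside the logarithm.
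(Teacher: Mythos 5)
This proposition is quoted (in simplified form) from~\cite{LPI_GD_alg}, and the paper under review gives no proof of its own, so the relevant comparison is with the standard argument behind the cited result: your reconstruction --- aggregating the coordinate-wise sup-norm guarantee into $\|e_k\|_2 \le \sqrt{p}\,\delta_k$, applying the descent lemma with the exact cross-term cancellation at step size $1/L_1$, invoking the Polyak--\L ojasiewicz inequality to get the contraction $a_k \le (1-1/\sigma)a_{k-1} + p\delta_k^2/(2L_1)$, and unrolling --- is precisely that argument and is correct. Your balancing choice $\delta^2 = (1-1/\sigma)^K$ is what produces the additive $p/(2\mu)$ inside the logarithm of the stated $K$, and it yields an error tolerance of order $\varepsilon/p$, matching the proposition.
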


The authors in~\cite[Theorem~$2.4$]{LPI_GD_alg} show an estimate of $|\mathcal{G}_m| = m^d$ as
 \begin{align*}
     m^d & =  O \left( C_{\mu,L_1,L_2}(d,\eta)  \left({1}/{\varepsilon} \right)^{d/(2\eta)} \right),
 \end{align*}
 for a positive constant $C_{\mu,L_1,L_2}(d,\eta)$.  Thus, the oracle complexity of Algorithm~\ref{alg:LPI_GD} can be estimated as 
 \begin{align}\label{eq:total}
   O\left( \sigma C_{\mu,L_1,L_2}(d,\eta)  \left({1}/{\varepsilon} \right)^{d/(2\eta)} \log ({1}/{\varepsilon}) \right).
 \end{align}
 
\begin{remark}
Our main technical contributions will be to propose two accelerated methods with oracle complexities $O\left( \sqrt{\sigma} m^d \log ({1}/{\varepsilon}) \right)$. The term $m^d$ corresponding to the grid size cannot be improved as it is the minimum required to reach the desired accuracy at each iteration. Therefore, we focus on improving the explicit dependency on the condition number of the function, from $\sigma$ to $\sqrt{\sigma}$. Moreover, we will provide the first empirical analysis of local polynomial interpolation for approximate gradient methods.
\end{remark}

The following sections describe two approaches for accelerating the LPI-GD algorithm and their corresponding iteration and oracle complexities.

	\section{Catalyst Acceleration}\label{sec:algo_catalyst}

	Catalyst is one of the most used generic acceleration methods in machine learning for its ability to provably generically accelerate large classes of base algorithms~\cite{Catalyst_main_acc,catalyst_18,catalyst_15,Catalyst_DSGD,paquette2017catalyst}. It can provably achieve better iteration complexity of a base algorithm with a generic iterative approximation procedure. The main idea is to minimize a particular surrogate function at every iteration and warm start the next step on the approximate minimizer of the surrogate function of the previous iteration, see~\cite{catalyst_18,catalyst_15} for a detailed description. 
	
	Acceleration using Catalyst can be achieved when the surrogate function $h_k$ satisfying the following properties~\cite{Catalyst_main_acc}:
\begin{align*}
    \left(\mathcal{H}_{1}\right)\quad &h_{k} \text{ is } (\kappa+\mu)\text{-strongly convex}. \\
    \left(\mathcal{H}_{2}\right)\quad &\mathbb{E}[h_{k}(\theta)] \leq F(\theta)+\frac{\kappa}{2}\left\|\theta-z_{k-1}\right\|^{2} \\
    &\text{ for } \theta= \alpha_{k-1}\text{argmin}_{\theta} \{ F(\theta) \}+\left(1-\alpha_{k-1}\right) \theta_{k-1}. \\
    \left(\mathcal{H}_{3}\right)\quad &\forall \varepsilon_{k} \geq 0,~ \mathcal{M} \text{ can provide a point }\\
    &\theta_{k} : \mathbb{E}\left[h_{k}\left(\theta_{k}\right)-h_{k}^{\star}\right] \leq \varepsilon_{k},
\end{align*}
where $\mathcal{M}$ is the base method we are trying to accelerate, LPI-GD in our case.

Under the properties $\left(\mathcal{H}_{1}\right)$, $\left(\mathcal{H}_{2}\right)$, and $\left(\mathcal{H}_{3}\right)$, we can define a generic Catalyst acceleration method following~\cite{Catalyst_main_acc} as described in Algorithm~\ref{algo:cata}.

\begin{algorithm}[t!]
\caption{Catalyst Acceleration LPI-GD}\label{alg:catalyst_acc}
\begin{algorithmic}
\Require $\theta^{0}$ (initialization); $\mu$ (strong convexity constant); $\kappa$ (parameter for $h_{k}) ; K$ (number of iterations); $\left\{\varepsilon_{k}\right\}_{k=1}^\infty$ (sequence of approximation errors).
\Ensure $z_{0}=\theta^{(0)};~ q = \frac{\mu}{\mu+\kappa};~ \alpha_{0}=\sqrt{q}$.
\For{$k = 1, \ldots, K$}
\State \textbf{1.} Choose a surrogate $h_{k}$ satisfying $\left(\mathcal{H}_{1}\right),\left(\mathcal{H}_{2}\right)$ and calculate $\theta_{(k)}$ using Algorithm~\ref{alg:LPI_GD}, satisfying $\left(\mathcal{H}_{3}\right)  \text{ for } \varepsilon_{k};$ 
\State \textbf{2.} Find $\alpha_{k}\in(0,1)$ s.t. $\alpha_{k}^{2}=\left(1-\alpha_{k}\right) \alpha_{k-1}^{2}+q \alpha_{k} .$
\State \textbf{3.} Update the extrapolated sequence $z_{k}=\theta^{(k)}+\beta_{k}\left(\theta^{(k)}-\theta^{(k-1)}\right)$ 
    with $\beta_{k}=\frac{\alpha_{k-1}\left(1-\alpha_{k-1}\right)}{\alpha_{k-1}^{2}+\alpha_{k}}$.
\EndFor
\Return Output: $\theta_{K}$ (final estimate).
\end{algorithmic}
\label{algo:cata}
\end{algorithm}

The following theorem presents an explicit oracle and iteration complexity analysis of the Catalyst technique applied to the acceleration of Algorithm~\ref{alg:LPI_GD}. 

\begin{theorem}\label{theo:cata}
 Let $\{h_k(\theta)\}_{k\geq 0}$ defined as $h_k(\theta) = f(\theta) + \frac{\kappa}{2} \|  \theta - z_{k-1}  \|$ for $k\geq 0$, where $\kappa = L_1 -\mu$, and  $\varepsilon_{k}=O\left((1-1/3\sqrt{\sigma})^{k}\left(F\left(\theta_{0}\right)-F^{\star}\right)\right)$. Then, the total iteration complexity of Algorithm~\ref{alg:catalyst_acc} can be bounded as
\begin{align*}
&  O\Big(\sqrt{\sigma} \log \left(\frac{F\left(\theta_0\right)-F^{\star}}{\sigma \varepsilon}\right)  C_{L_1,L_h,L_2}(d,\eta) \times \\
 & \qquad \times \left(\frac{p }{\varepsilon_K} + \frac{ 2 L_1}{q^2} \right)^{\frac{d}{2\eta}} \log \left(\frac{1}{q^2}+\frac{ {p}/{(2 L_1)}}{\varepsilon_K}\right) \Big).
\end{align*}
\end{theorem}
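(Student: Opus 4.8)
The plan is to combine the generic outer convergence guarantee of Catalyst with the per-subproblem complexity of LPI-GD from Proposition~\ref{prop:iter} and Theorem~\ref{theo:inter}. First I would verify that the chosen surrogates (understood with the squared proximal term) $h_k(\theta) = F(\theta) + \frac{\kappa}{2}\|\theta - z_{k-1}\|^2$ satisfy $(\mathcal{H}_1)$--$(\mathcal{H}_3)$: the $\mu$-strong convexity of $F$ plus the $\kappa$-strongly convex quadratic gives $(\kappa+\mu)$-strong convexity, which is $(\mathcal{H}_1)$; $(\mathcal{H}_2)$ follows from the exactness of the surrogate at the prescribed extrapolation point; and $(\mathcal{H}_3)$ holds because LPI-GD is linearly convergent on $h_k$ and can therefore reach any inner accuracy $\varepsilon_k$. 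With $\kappa = L_1 - \mu$ we have $q = \mu/(\mu+\kappa) = \mu/L_1 = 1/\sigma$, so $1/\sqrt{q} = \sqrt{\sigma}$ and the Catalyst rate $\rho \approx \sqrt{q}/3 = 1/(3\sqrt{\sigma})$ is exactly the factor appearing in the prescribed $\{\varepsilon_k\}$.

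Next I would invoke the Catalyst convergence theorem of~\cite{Catalyst_main_acc}: under $(\mathcal{H}_1)$--$(\mathcal{H}_3)$ and the geometric choice of $\{\varepsilon_k\}$, the number of outer iterations to reach $F(\theta_K) - F^\star \leq \varepsilon$ is $K = O\big(\sqrt{\sigma}\,\log((F(\theta_0)-F^\star)/(\sigma\varepsilon))\big)$, which is the leading $\sqrt{\sigma}\log(\cdot)$ factor in the claim. The crux is then to bound the cost of each inner solve. The surrogate $h_k$ is $L_h$-smooth and $(\mu+\kappa)=L_1$-strongly convex, hence has condition number $\sigma_h = L_h/L_1 = (2L_1-\mu)/L_1 \leq 2 = O(1)$. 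Applying Proposition~\ref{prop:iter} to the minimization of $h_k$ by LPI-GD gives an inner iteration count of order $\sigma_h \log\big((h_k(\theta_{k-1}) - h_k^\star + p/(2L_1))/\varepsilon_k\big)$, while Theorem~\ref{theo:inter} together with the prescription $\delta_k^2 = \Theta(\varepsilon_k/p)$ yields a grid size (equivalently, a per-inner-iteration oracle cost) $m_k^d = O\big(C_{L_1,L_h,L_2}(d,\eta)\,(\,\cdot\,/\varepsilon_k)^{d/(2\eta)}\big)$, whose constant inherits $\mu_h = L_1$ and $L_h$ in place of the original $\mu, L_1$, matching the subscript $C_{L_1,L_h,L_2}$ in the statement.

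The remaining and most delicate step is to bound the warm-start gap $h_k(\theta_{k-1}) - h_k^\star$ of each inner problem. I would control it via $L_h$-smoothness, $h_k(\theta_{k-1}) - h_k^\star \leq \tfrac{L_h}{2}\|\theta_{k-1} - \theta_k^\star\|^2$, and bound the displacement $\|\theta_{k-1} - \theta_k^\star\|$ using the Catalyst iterate dynamics; this is where the $1/q^2 = \sigma^2$ dependence enters both the grid argument $\big(p/\varepsilon_K + 2L_1/q^2\big)$ and the inner log factor $\log(1/q^2 + (p/(2L_1))/\varepsilon_K)$. The main obstacle is precisely this propagation of the outer extrapolation into a uniform bound on the inner initial gap, since the extrapolated sequence $z_k$ can move on the scale of $1/q$ per step; a clean bound requires tracking the estimate (Lyapunov) sequence used in the Catalyst convergence proof rather than a naive telescoping.

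Finally I would assemble the total oracle complexity as $\sum_{k=1}^K (\text{inner iterations}_k)\cdot m_k^d$. Because $\varepsilon_k$ decays geometrically, both the grid sizes $m_k^d \propto (1/\varepsilon_k)^{d/(2\eta)}$ and the inner iteration counts are largest at $k=K$, so the geometric sum is dominated by its last term; a crude but valid upper bound then replaces every $\varepsilon_k$ by $\varepsilon_K$ and multiplies the worst-case inner cost by the outer count $K$. Substituting the outer count, the $O(1)$ condition number $\sigma_h$, the grid estimate evaluated at $\varepsilon_K$ (with the initial-gap contribution $2L_1/q^2$), and the inner log factor yields precisely the stated bound.
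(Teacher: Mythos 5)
Your proposal follows essentially the same route as the paper's proof: the Catalyst outer-loop guarantee with $q=\mu/(\mu+\kappa)=1/\sigma$ giving $K=O\big(\sqrt{\sigma}\log((F(\theta_0)-F^{\star})/(\sigma\varepsilon))\big)$, the inner-solve cost obtained by applying Proposition~\ref{prop:iter} and Theorem~\ref{theo:inter} to the surrogate with $\mu_h=L_1$, $L_h=2L_1-\mu$, $\sigma_h<2$, a warm-start gap of order $\varepsilon_{k-1}/q^{2}$ injecting the $2L_1/q^2$ and $1/q^2$ terms, and a final summation dominated by the $k=K$ term. The only difference is that the step you flag as most delicate---bounding $h_k(\theta_{k-1})-h_k^{\star}$---is not re-derived in the paper but imported directly from~\cite[Proposition~5]{Catalyst_main_acc}, which yields $h_k(\theta_{k-1})-h_k^{\star}=O\big(\tfrac{\kappa}{\mu q}\varepsilon_{k-1}\big)=O(\varepsilon_{k-1}/q^{2})$, so your Lyapunov-tracking sketch would amount to reproving that cited result.
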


\begin{proof}
In~\cite{Catalyst_main_acc}, the authors showed  that after running Algorithm~\ref{alg:catalyst_acc} for $k$ iterations, the following inequality holds:
\begin{align*}
    & F\left( \theta^{(k)}\right)-F^{\star} \leq\left(1-\frac{\sqrt{q}}{2}\right)^{(k)} \times\\
    &\left(2\left(F\left( \theta^{(0)}\right)-F^{\star}\right) +
     4 \sum_{j=1}^{k}\left(1-\frac{\sqrt{q}}{2}\right)^{-j}\left(\varepsilon_{j}+\frac{\varepsilon_{j}}{\sqrt{q}}\right)\right),
\end{align*}
where $q = 1/\sigma$. From the initial hypothesis we have that $h_k(\theta) = F(\theta) + \frac{\kappa}{2} \|  \theta - y_{k-1}  \|$. If we choose  $\varepsilon_{k}=O\left((1-1/\left(3\sqrt{\sigma}\right))^{k}\left(F\left(x_{0}\right)-F^{\star}\right)\right)$, then the number of outer iterations will be $K=O\left(\sqrt{\sigma} \log \left(\sigma {(F\left(\theta_0\right)-F^{\star})}/{ \varepsilon}\right)\right)$, see~\cite[Supplementary B.3]{Catalyst_main_acc}. Moreover, when  $h_{k}\left(\theta_{k-1}\right)-h_{k}^{\star}=O\left(\frac{\kappa}{\mu q} \varepsilon_{k-1}\right)=O\left(\varepsilon_{k-1} / q^{2}\right)$, the final accuracy is
$\varepsilon=O\left(\varepsilon_{K} / q\right)$~\cite[ Proposition 5]{Catalyst_main_acc}.

Setting $\mu_h = \mu + \kappa = L_1$ and $L_h = 2L_1 - \mu$ it follows from~\eqref{eq:total}, that the number of inner iteration required at each inner loop of Algorithm~\ref{alg:catalyst_acc} is
\begin{align}\label{eq:total_cata}
&\Big( \sigma_h C_{L_1,L_h,L_2}(d,\eta)  \left(\frac{p +2L_1(h\left(\theta^{(k - 1)}\right)-h_{*})}{\varepsilon_k} \right)^{d/(2\eta)} \times \nonumber \\
& \qquad \qquad \log \left(\frac{h\left(\theta^{(k - 1)}\right)-h_{*}+\frac{p}{2 L_1}}{\varepsilon_k}\right) \Big).
\end{align}
Note that in~\eqref{eq:total_cata} we have a prespecified accuracy $\varepsilon_k$, and the condition number is with respect to the surrogate function, i.e., $\sigma_{h} = \frac{L_1 + \kappa}{\mu + \kappa}$, with $\kappa = L_1 - \mu$, then $\sigma_{h} =  \frac{2L_1 + \mu}{L_1} $. Therefore the total number of oracle calls will be the aggregation of~\eqref{eq:total_cata} across the total number of outer loop iterations $K$, i.e.,
\begin{align*}
    &\sum_{k=1}^{K} \Big( \sigma_h C_{L_1,L_h,L_2}(d,\eta)  \left(\frac{p {+}2L_1(h\left(\theta^{(k {-} 1)}\right){-}h_{*})}{\varepsilon_k} \right)^{d/(2\eta)} \times \nonumber \\
& \qquad \qquad \log \left(\frac{h\left(\theta^{(k - 1)}\right)-h_{*}+\frac{p}{2 L_1}}{\varepsilon_k}\right) \Big) \Big).
\end{align*} 
Note that following $\mathcal{H}_{3}$ we have that $h\left(\theta^{(k - 1)}\right)-h_{*}\leq O\left(\varepsilon_{k-1} / q^{2}\right)$, thus,
\begin{align*}
    &\sum_{k=1}^{K} \Big( \sigma_h C_{L_1,L_h,L_2}(d,\eta)  \left(\frac{p {+}\frac{2  L_1\varepsilon_{k-1}}{q^2}}{\varepsilon_k} \right)^{\frac{d}{2\eta}} \log \left(\frac{\frac{\varepsilon_{k-1}}{q^2} {+}\frac{p}{2 L_1}}{\varepsilon_k}\right) \\
    & = O\Big(\sqrt{\sigma} \log \left(\frac{F\left(\theta_0\right)-F^{\star}}{\sigma \varepsilon}\right) \sigma_h C_{L_1,L_h,L_2}(d,\eta) \times \\
    & \qquad \times \left(\frac{p }{\varepsilon_K} + \frac{ 2 L_1}{q^2} \right)^{\frac{d}{2\eta}} \log \left(\frac{1}{q^2}+\frac{ \frac{p}{2 L_1}}{\varepsilon_K}\right) \Big), 
\end{align*}
where $\sigma_h = \frac{L_h}{m_h} = \frac{2L_1 - \mu}{L_1} < 2$
This completes the proof.
\end{proof}

Theorem~\ref{theo:cata} shows that the dependency on the condition number is reduced from $\sigma$ to $\sqrt{\sigma}$, which is traditionally understood as acceleration. Specifically, the total number of required oracle calls is
 \begin{align*}
   O\left( \sqrt{\sigma} C_{L_1,L_h,L_2}(d,\eta)  \left({1}/{\varepsilon} \right)^{d/(2\eta)} \log ({1}/{\varepsilon}) \right).
 \end{align*}

In the next section, we will explore another form of acceleration based on inexact oracles theory.

\section{Nesterov acceleration}\label{sec:algo}
\begin{algorithm}[t!]
\caption{Fast Gradient Method LPI-GD}\label{alg:nesterov_acc}
\begin{algorithmic}
\Require Define 
\begin{align*}
&d(\theta) = \frac{1}{2}\left\|\theta-\theta_{0}\right\|_{E}^{2}, 
H(\theta) = L d(\theta)+\\
&\sum_{i=0}^{k} \alpha_{i}\left[\left\langle g_{\delta, L, \mu}\left(\theta_{i}\right), \theta-\theta_{i}\right\rangle+\frac{\mu}{2}\left\|\theta-\theta_{i}\right\|_{E}^{2}\right]
\end{align*}
Define $A_{k}=\sum_{i=0}^{k} \alpha_{i}$, $\alpha_{0}={L}/{(L-\mu)}$, $(L-\mu) \alpha_{k+1}=A_{k} \mu+L$, $\tau_{k}=\frac{\alpha_{k+1}}{A_{k+1}}, k \geq 0$, and initialize $\theta_{0}$.
\For{$k = 1, \ldots, K$}
\State \textbf{1.} $\text {Obtain }\left(f_{\delta, L, \mu}\left(\theta_{k}\right), g_{\delta, L, \mu}\left(\theta_{k}\right)\right)$ 
\State \textbf{2.} Compute
\State \begin{small}
$y_{k}=\arg \min_{\theta}\left\{\left\langle g_{\delta, L, \mu}\left(\theta_{k}\right), \theta-\theta_{k}\right\rangle+\frac{L}{2}\left\|\theta-\theta_{k}\right\|_{E}^{2}\right\}$
\end{small}
\State \textbf{3.} $\text {Compute }$ 
$ z_{k}=\arg \min _{\theta}\left\{ H(\theta)\right\}$.
\State \textbf{4.} $\text { Define } \theta_{k+1}=\tau_{k} z_{k}+\left(1-\tau_{k}\right) y_{k}$
\EndFor
\Return Output: $\theta_{K}$ (final estimate).
\end{algorithmic}
\end{algorithm}

In this section, we study  Nesterov acceleration for LPI-GD~\cite{Nesterov_strong_acc}. Algorithm~\ref{alg:catalyst_acc} uses LPI-GD to minimize the surrogate function at each iteration, while the work in~\cite{Nesterov_strong_acc} uses the Fast Gradient Method algorithm with inexact oracles. In~\cite{Nesterov_strong_acc}, the authors assume that functions $f_{\delta, L, mu}(\theta)$ and $g_{\delta, L, mu}(\theta)$ exists, which are an approximation of the value of the function and the gradient at the point  $\theta$ respectively. Those functions should satisfy:
\begin{align}\label{eq:delta_def}
\frac{\mu}{2}\|\theta_1- \theta_2\|_{E}^{2} \leq G(\theta_1, \theta_2) \leq \frac{L}{2}\|\theta_1-\theta_2\|_{E}^{2}+\delta,
\end{align}
\begin{small}
where
\begin{align}\label{eq:G}
G(\theta_1, \theta_2) = f(\theta_1)-\left(f_{\delta, L, \mu}(\theta_2)+\left\langle g_{\delta, L, \mu}(\theta_2), \theta_1-\theta_2\right\rangle\right)
\end{align}
\end{small}
for all $\theta_1, \theta_2$ where $\delta \geq 0$ and $L \geq \mu \geq 0$.

We will show that the gradient approximation based on the local polynomial interpolation method presented in Section~\ref{sec:assumpt} has the properties in~\eqref{eq:delta_def} for a particular value $\delta$. Thus, $f_{\delta, L, \mu} = F(\theta)$ and $g_{\delta, L, \mu}(\theta) =  \widehat{\nabla F}(\theta)$. Next, we present our a technical lemma that shows that the approximate gradient computed using local polynomial interpolations is an inexact oracle in the sense of~\eqref{eq:delta_def}.


\begin{lemma}
The gradient approximation based on local polynomial interpolation where $g_{\delta, L, \mu}(\theta) =  \widehat{\nabla F}(\theta)$ is an inexact oracle in the sense of~\eqref{eq:G} with $\delta = p\left( 1 - \frac{1}{\sigma} \right)^2$ and $L = L_1^2 + 2 - \frac{\mu}{2}$.
\end{lemma}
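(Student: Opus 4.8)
The plan is to reduce the inexact-oracle inequalities~\eqref{eq:delta_def}--\eqref{eq:G} to the exact strong convexity and smoothness of $F$ plus a controllable perturbation coming from the interpolation error. First I would quantify how far the interpolated gradient $\widehat{\nabla F}(\theta)$ is from the true gradient $\nabla F(\theta)$. By Theorem~\ref{theo:inter}, each coordinate interpolator satisfies $\sup_x |\hat\phi_i(x) - g_i(x)| \le \delta_{\mathrm{interp}}$; since $\widehat{\nabla F}(\theta)$ and $\nabla F(\theta)$ are coordinatewise averages over the $n$ data points of $\hat\phi_i$ and $g_i$ respectively, the averaging does not increase the per-coordinate error, so $\|\widehat{\nabla F}(\theta) - \nabla F(\theta)\|_\infty \le \delta_{\mathrm{interp}}$ and hence $\|\widehat{\nabla F}(\theta) - \nabla F(\theta)\|_2 \le \sqrt{p}\,\delta_{\mathrm{interp}} =: \Delta$.

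Next I would decompose $G$. Writing $e := \widehat{\nabla F}(\theta_2) - \nabla F(\theta_2)$ and using $f_{\delta,L,\mu} = F$, $g_{\delta,L,\mu} = \widehat{\nabla F}$, we get $G(\theta_1,\theta_2) = D(\theta_1,\theta_2) - \langle e, \theta_1 - \theta_2\rangle$, where $D(\theta_1,\theta_2) = F(\theta_1) - F(\theta_2) - \langle \nabla F(\theta_2), \theta_1 - \theta_2\rangle$ is the exact Bregman divergence of $F$. Because $F$ is an average of the maps $f(x^{(i)};\cdot)$, it inherits $\mu$-strong convexity and $L_1$-smoothness from the assumptions of Section~\ref{sec:assumpt}, so $\frac{\mu}{2}\|\theta_1-\theta_2\|^2 \le D(\theta_1,\theta_2) \le \frac{L_1}{2}\|\theta_1-\theta_2\|^2$. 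It then remains only to control the cross term, which by Cauchy--Schwarz obeys $|\langle e, \theta_1-\theta_2\rangle| \le \Delta\,\|\theta_1-\theta_2\|$. Applying Young's inequality $\Delta\,\|\theta_1-\theta_2\| \le \frac{c}{2}\|\theta_1-\theta_2\|^2 + \frac{1}{2c}\Delta^2$ with a suitably chosen $c>0$ and adding it to the upper bound on $D$ yields the right inequality of~\eqref{eq:delta_def} with effective smoothness $L = L_1 + c$ and additive slack $\delta = \frac{1}{2c}\Delta^2$; matching these to the claimed $L = L_1^2 + 2 - \mu/2$ and $\delta = p(1-1/\sigma)^2$ fixes the Young parameter $c$ and the interpolation budget (here $\delta_{\mathrm{interp}} = (1-1/\sigma)\sqrt{2c}$), consistent with the accuracy prescription $\delta_k^2 = \Theta(\varepsilon/p)$ of Proposition~\ref{prop:iter}.

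The main obstacle is the lower inequality $\frac{\mu}{2}\|\theta_1-\theta_2\|^2 \le G$. The exact divergence already gives $D \ge \frac{\mu}{2}\|\theta_1-\theta_2\|^2$, but the cross term $-\langle e,\theta_1-\theta_2\rangle$ is only linear in $\|\theta_1-\theta_2\|$ and can be negative, so for small displacements it is not dominated by the quadratic gap $D - \frac{\mu}{2}\|\theta_1-\theta_2\|^2$. The clean fix, in the spirit of the Devolder--Glineur--Nesterov inexact-oracle model used by~\cite{Nesterov_strong_acc}, is not to insist on retaining the full $\mu$ on the left: one absorbs the cross term by a second Young split and states the left inequality with a slightly reduced strong-convexity constant (equivalently, shifts $f_{\delta,L,\mu}$ by a constant), at the price of enlarging $\delta$. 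I would therefore carry out the lower bound with a Young parameter tuned so that the residual quadratic deficit is exactly compensated, verify that the resulting strong-convexity constant remains $\mu$ to leading order while the slack stays $O(\Delta^2) = O(p\,\delta_{\mathrm{interp}}^2)$, and collect the two inequalities to certify $(\delta, L, \mu)$-inexactness with the stated parameters.
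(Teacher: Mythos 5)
Your proposal is correct in substance and is in fact more complete than the paper's own argument, but it follows a genuinely different route. The paper proves only the upper inequality in~\eqref{eq:delta_def}: it applies strong convexity at $\theta_1$ to bound $f(x;\theta_1)-f(x;\theta_2)$, splits $\widehat{\nabla F}(\theta_2)-\nabla_\theta f(x;\theta_1)$ into the interpolation error plus the gradient increment $\nabla_\theta f(x;\theta_2)-\nabla_\theta f(x;\theta_1)$, and bounds each inner product by the crude inequality $\langle a,b\rangle \le \|a\|_2^2+\|b\|_2^2$; this is why its smoothness constant scales as $L_1^2$ (and why the resulting $L$ multiplies $\|\theta_1-\theta_2\|_2^2$ rather than $\tfrac12\|\theta_1-\theta_2\|_2^2$ as \eqref{eq:delta_def} requires), and the value $\delta_0=1-1/\sigma$ is simply asserted in the last line, never derived. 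You instead use the standard Devolder--Glineur--Nesterov decomposition $G=D-\langle e,\theta_1-\theta_2\rangle$ with $D$ the exact Bregman divergence, bound $D$ by $L_1$-smoothness, and absorb the cross term with a tunable Young parameter, obtaining $L=L_1+c$, which is tighter and dimensionally cleaner; the cost is that the lemma's specific constants can only be recovered by reverse-engineering $c$ and the interpolation budget, as you acknowledge. Both arguments share the same first step, converting the sup-norm interpolation guarantee of Theorem~\ref{theo:inter} into the $\ell_2$ gradient-error bound $\sqrt{p}\,\delta_{\mathrm{interp}}$ (the paper cites Eq.~(60) of~\cite{LPI_GD_alg}, where you re-derive it by coordinatewise averaging).

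The most valuable difference is your treatment of the lower inequality $\tfrac{\mu}{2}\|\theta_1-\theta_2\|_2^2\le G$: the paper's proof never addresses it, and, as you correctly observe, it cannot hold verbatim with $f_{\delta,L,\mu}=F$ and $g_{\delta,L,\mu}=\widehat{\nabla F}$, since the linear error term $-\langle e,\theta_1-\theta_2\rangle$ can push $G$ below the quadratic (indeed below zero) for small displacements. Your remedy --- a second Young split that trades a reduction of the strong-convexity constant (equivalently, a constant shift of the function-value oracle) against an $O(\Delta^2)$ enlargement of $\delta$ --- is the standard one and is what a complete proof requires. One caveat: the reduced constant cannot ``remain $\mu$ to leading order'' for free; to keep the slack $O(\Delta^2)$ you must settle for $(1-\epsilon)\mu$ with fixed $\epsilon>0$, the slack inflating like $1/\epsilon$. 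So, strictly, both your argument and any repaired version of the paper's establish a $(\delta,L,\mu')$-oracle with $\mu'<\mu$, a discrepancy that propagates into Theorem~\ref{theo:nesterov}, which uses $\mu$ itself.
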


\begin{proof}
Our task is to find $\delta>0$ such:
\begin{align*}
   &0 \leq f(x, \theta_1)-\left(f(x, \theta_2)+\left\langle g_{\delta, L}(\theta_2), \theta_1-\theta_2\right\rangle\right) \leq\\
    &\frac{L}{2}\|\theta_1 - \theta_2\|_{E}^{2}+\delta.
\end{align*}
Using  strongly convexity we have that
\begin{align*}
&\forall \theta_{1}, \theta_{1} \in \mathbb{R}^{p},\quad f\left(x ; \theta_{2}\right) \geq \\
&f\left(x ; \theta_{1}\right)+\nabla_{\theta} f\left(x ; \theta_{1}\right)^{\mathrm{T}}\left(\theta_{2}-\theta_{1}\right)+\frac{\mu}{2}\left\|\theta_{2}-\theta_{1}\right\|_{2}^{2},
\end{align*}
which can be rewritten as
\begin{align*}
&f(x, \theta_1) - f(x, \theta_2) \leq\\
&- \langle \nabla_{\theta} f\left(x ; \theta_{1}\right), \theta_{2}-\theta_{1} \rangle -\frac{\mu}{2}\left\|\theta_{2}-\theta_{1}\right\|_{2}^{2}
\end{align*}
Therefore,
\begin{align*}
&f(x, \theta_1) - f(x, \theta_2) - \langle \widehat{\nabla F}^{(t)}(\theta_2), \theta_1 - \theta_2 \rangle  \leq\\  
& \langle \widehat{\nabla F}^{(t)}(\theta_2) - \nabla_{\theta} f\left(x ; \theta_{1}\right), \theta_2 - \theta_1 \rangle  - \frac{\mu}{2}\left\|\theta_{2}-\theta_{1}\right\|_{2}^{2} = \\
&  \langle \widehat{\nabla F}^{(t)}(\theta_2) - \nabla_{\theta} f\left(x ; \theta_{2}\right), \theta_2 - \theta_1 \rangle +\\
&\langle\nabla_{\theta} f\left(x ; \theta_{2}\right) - \nabla_{\theta} f\left(x ; \theta_{1}\right), \theta_2 - \theta_1 \rangle - \frac{\mu}{2}\left\|\theta_{2}-\theta_{1}\right\|_{2}^{2}
\end{align*}
It follows from \cite[Eq. 60]{LPI_GD_alg}, that
\begin{align*}
&\langle \widehat{\nabla F}^{(t)}(\theta_2) - \nabla_{\theta} f\left(x ; \theta_{2}\right), \theta_2 - \theta_1 \rangle \leq \\
&\| \widehat{\nabla F}^{(t)}(\theta_2) - \nabla_{\theta} f\left(x ; \theta_{2}\right) \|_2^2 + \|  \theta_2 - \theta_1  \|_2^2 \\
&\leq (\sqrt{p} \delta_0)^2 + \left\|\theta_{2}-\theta_{1}\right\|_{2}^{2},
\end{align*}
and from \cite[Assumption 5]{LPI_GD_alg}, we obtain
\begin{align*}
&\langle\nabla_{\theta} f\left(x ; \theta_{2}\right) - \nabla_{\theta} f\left(x ; \theta_{1}\right), \theta_2 - \theta_1 \rangle  \leq\\
&\| \nabla_{\theta} f\left(x ; \theta_{2}\right) - \nabla_{\theta} f\left(x ; \theta_{1}\right) \|_2^2  + \left\|\theta_{2}-\theta_{1}\right\|_{2}^{2}\leq\\
&( L_1^2+1)\left\|\theta_{2}-\theta_{1}\right\|_{2}^{2}.
\end{align*}
Finally, we obtain
\begin{align*}
&f(x, \theta_1) - f(x, \theta_2) - \langle \widehat{\nabla F}^{(t)}(\theta_2), \theta_1 - \theta_2 \rangle \leq \\
&(L_1^2 + 2 - \frac{\mu}{2}) \left\|\theta_{2}-\theta_{1}\right\|_{2}^{2} + p \delta_0^2.
\end{align*}
Where $\delta_0 = \left(1 - \frac{1}{\sigma} \right)$.
\end{proof}

We are now ready to state our accelerated convergence result for Algorithm~\ref{alg:nesterov_acc}.

\begin{theorem}\label{theo:nesterov}
Let $\varepsilon>0$ be a desired accuracy, 
\begin{align*}
    \delta \leq  \frac{\varepsilon}{2\sqrt{p}}(1 {+} \sigma)^{-1} \quad \text{and} \quad K = 2\sqrt{\sigma}\log\big({L_1 \left\|\theta^{\star}{-}\theta_{0}\right\|_{2}^{2}}/{\varepsilon}\big),
\end{align*}
Then, the output of algorithm Algorithm~\ref{alg:nesterov_acc} has the following property: $F(\theta_K) - F^{\star} \leq \varepsilon$.
\end{theorem}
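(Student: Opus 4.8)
The plan is to reduce the analysis of Algorithm~\ref{alg:nesterov_acc} to the inexact-oracle Fast Gradient Method of~\cite{Nesterov_strong_acc}, so that almost no estimating-sequence bookkeeping has to be redone here. The preceding Lemma already certifies that the pair $f_{\delta,L,\mu} = F$ and $g_{\delta,L,\mu}(\theta) = \widehat{\nabla F}(\theta)$ satisfies the two-sided inexact-oracle inequality~\eqref{eq:delta_def} with the stated $L = L_1^2 + 2 - \mu/2$ and additive error $\delta = p\,\delta_0^2$, where $\delta_0$ is the tunable supremum-norm interpolation accuracy supplied by Theorem~\ref{theo:inter} (the value $\delta_0 = 1 - 1/\sigma$ used in the Lemma is just one admissible choice; here $\delta_0$ will be driven smaller). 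Consequently Algorithm~\ref{alg:nesterov_acc} is exactly the accelerated scheme of~\cite{Nesterov_strong_acc} run on a genuine $(\delta,L,\mu)$-inexact oracle, and I may import its convergence estimate, which for a strongly convex objective has the form
\begin{align*}
F(\theta_K) - F^{\star} \leq \Big(1 - \tfrac{c}{\sqrt{\sigma}}\Big)^{K} L_1 \|\theta^{\star} - \theta_0\|_2^2 + C_{\sigma}\,\delta,
\end{align*}
with an absolute constant $c>0$ and an accumulated-error coefficient $C_{\sigma}$.

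The proof then splits the budget $\varepsilon$ into two halves. First I would pick $K$ so the geometric term is at most $\varepsilon/2$: using $(1 - c/\sqrt{\sigma})^{K} \leq \exp(-cK/\sqrt{\sigma})$ and solving $\exp(-cK/\sqrt{\sigma})\,L_1\|\theta^{\star}-\theta_0\|_2^2 \leq \varepsilon/2$ gives $K = O\big(\sqrt{\sigma}\log(L_1\|\theta^{\star}-\theta_0\|_2^2/\varepsilon)\big)$, which reproduces the stated iteration count and the accelerated $\sqrt{\sigma}$ rate. Second I would force $C_{\sigma}\,\delta \leq \varepsilon/2$; since the accumulated-error coefficient scales like $\sqrt{p}\,(1+\sigma)$---the $\sqrt{p}$ coming from the coordinatewise structure of $\widehat{\nabla F}$ and the $(1+\sigma)$ from the way the inexact oracle compounds in the accelerated recursion---this yields precisely the stated threshold $\delta \leq \tfrac{\varepsilon}{2\sqrt{p}}(1+\sigma)^{-1}$. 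Adding the two halves gives $F(\theta_K) - F^{\star} \leq \varepsilon$. Finally, to recover the oracle complexity I would convert $\delta = p\,\delta_0^2$ back through Theorem~\ref{theo:inter} into the grid cardinality $m^{d} = O((1/\delta_0)^{d/\eta})$ required per iteration.

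The main obstacle is the \emph{error accumulation} that is intrinsic to accelerated methods. In plain LPI-GD the oracle error enters additively and does not compound, but the Fast Gradient Method propagates $\delta$ through its estimating sequence, so a careless bound would let the error grow with $K$ and wipe out the $\sqrt{\sigma}$ advantage. The delicate step is therefore to track, inside the recursion of~\cite{Nesterov_strong_acc} specialized to the $\mu$-strongly convex case, that $C_{\sigma}$ remains bounded by a factor of order $\sqrt{p}\,(1+\sigma)$ independent of $K$; this is exactly what lets a single fixed $\delta$ suffice (rather than an iteration-dependent schedule) and what produces the clean $(1+\sigma)^{-1}$ dependence in the admissible bound on $\delta$.
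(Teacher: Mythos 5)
Your proposal follows essentially the same route as the paper: both invoke the preceding Lemma to certify $(\delta,L,\mu)$-inexactness of the interpolated gradient, import the convergence bound of~\cite{Nesterov_strong_acc} (whose error coefficient is already capped at $1+\sqrt{L/\mu}$ independently of $K$, which is exactly how the paper dispenses with the error-accumulation concern you flag), and split the budget $\varepsilon$ into a geometric half yielding $K = O\big(\sqrt{\sigma}\log(L_1\|\theta^{\star}-\theta_0\|_2^2/\varepsilon)\big)$ and an oracle-error half yielding $\delta \leq \tfrac{\varepsilon}{2\sqrt{p}}(1+\sigma)^{-1}$. The only differences are cosmetic: you paraphrase the cited bound as a single contraction factor rather than the explicit $\min$ form of~\cite[Th.~7]{Nesterov_strong_acc}, and you fold the post-theorem grid-cardinality discussion into the proof itself.
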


\begin{proof}
It follows from~\cite[Th. 7]{Nesterov_strong_acc} that at every iteration $k$ the gap between current point and exact solution is bounded as
\begin{align}\label{eq:in_nes}
& f\left(y_{k}\right)-f^{*} \leq  \min \left(\frac{4 L d\left(\theta^{*}\right)}{k^{2}}, L d\left(\theta^{*}\right) \exp \left(-\frac{k}{2} \sqrt{\frac{\mu}{L}}\right)\right) \nonumber \\
&+\min \left(\left(\frac{1}{3} k+2.4\right),\left(1+\sqrt{\frac{L}{\mu}}\right)\right) \delta.
\end{align}

Therefore, to guarantee a suboptimality gap of \mbox{$\varepsilon>0$}, we can bound each factor in~\eqref{eq:in_nes} by ${\varepsilon}/{2}$. On the one hand, in order for $L d\left(\theta^{*}\right) \exp \left(-\frac{k}{2} \sqrt{\frac{\mu}{L}}\right) \leq \frac{\varepsilon}{2}$, then $k \geq 2\sqrt{\frac{L}{\mu}}\ln\left(\frac{2L d(\theta^{\star})}{\varepsilon}\right)$. On the other hand, in order to guarantee that $\delta\left(1+\sqrt{\frac{L}{\mu}}\right) \leq \frac{\varepsilon}{2}$, it is sufficient to set \mbox{$\delta \leq  \frac{\varepsilon}{2\sqrt{p}}(1 + \sigma)^{-1}$}. Recall that $L = L_1^2 + 2 - \frac{\mu}{2}$ and the desired result follows.
\end{proof}

It follows from~\cite[Theorem 2.2]{LPI_GD_alg} that in order to obtain a supremum norm interpolation $\delta$ using local polynomial interpolation, it is necessary that the size of the interpolation grid is $m = O( C_{\mu,L_1,L_2}(d,\eta)(1/\delta)^{1/\eta})$. Thus, $m = O( C_{\mu,L_1,L_2}(d,\eta)( \sqrt{p}(1+\sigma)/\varepsilon)^{1/\eta})$. Finally, at every iteration, in order to guarantee  $\delta \leq  \frac{\varepsilon}{2\sqrt{p}}(1 + \sigma)^{-1}$, we require a grid of size
\begin{align*}
    m^d = O(C_{\mu,L_1,L_2}(d,\eta)(\sqrt{p}(1+\sigma)/\varepsilon)^{d/\eta}),
\end{align*}
with the same number of oracle computations per iteration. Thus, we can conclude that the total number of gradient computations required by Algorithm~\ref{alg:nesterov_acc} to reach some desired accuracy $\varepsilon$ is 
\begin{align*}
    O\left(\sqrt{L_1\sigma}C_{\mu,L_1,L_2}(d,\eta)\left({1}/{\varepsilon} \right)^{d/\eta}\log\left({1}/{\varepsilon}\right)   \right).
\end{align*}




The following section shows the first empirical study of local polynomial interpolation-based gradient methods and their comparisons with their accelerated variants.

\section{Preliminary Numerical Analysis}\label{sec:num}

For our experimental setup, we follow, as closely as possible, the theoretical setup described in~\cite{LPI_GD_alg}. We study the linear regression for the case where $d = 1$, $h = 0.01$, and $n = 1000$. Then, we have $\mathcal{D} = \{x^{(i)} \in [0.01, 0.99] : i \in [1000] \}$. The one-dimensional linear regression problem can be described as 
\begin{equation*}
    \min_{w \in \mathbb{R}} \sum_{i=1}^{n} \lVert wx_i - y_i \rVert^2
\end{equation*}
where $x_i \in \mathcal{D}$. $y_i$ is found by first randomly generating an arbitrary $w_{\text{init}}$ from $(0,1)$. Then, $y_i = w_{\text{init}} x_i + \mathcal{N}(0, 0.05)$. This is a rather simple ERM problem to be solved. However, to our knowledge, there has been no prior implementation or empirical study of the weight interpolation calculation and LPI-GD algorithm described in \cite{LPI_GD_alg}. This allow us to focus on the showing numerical evidence for the acceleration of the proposed method.

Initially, we tested LPI-GD with different grid cardinalities to empirically determine an appropriate grid cardinality to solve the aforementioned linear regression problem. Since $n = 1000$, we tested the following grid cardinalities $ \lvert \mathcal{G}_m \rvert \in \{ 100, 200, 250, 500, 800\}$. The interpolation weights for the various grids were calculated following the procedure mentioned in \cite{LPI_GD_alg}. However, we used a step size of 1.0 as we experimentally found this to give the best convergence results. While running our experiments, we identified that the initialization point of the algorithm plays a crucial role in the smooth convergence to an approximate solution. We tested various graph sizes and a range of initialization points with this observation in mind. Since $w_{\text{init}} \in (0, 1)$, we tested various initial points $w_0 \in \{0, 0.2, 0.4, 0.6, 0.8, 1.0\}$. We then ran a grid search over the grid cardinalities $\lvert \mathcal{G}_m \rvert$ and the initial point $w_0$ to analyze the effects of these hyperparameters.
These experiments helped us determine the trade-off between the accuracy, number of iterations to converge, and oracle complexity. Note that the oracle complexity of LPI-GD methods is equivalent to the cardinality of the grid.
\par
Using the results from our grid search, we were able to identify the initialization point that provided the best computational performance for the different graph cardinalities we tested.
\begin{figure}[t!]
    \centering
    \includegraphics[width=0.9\columnwidth]{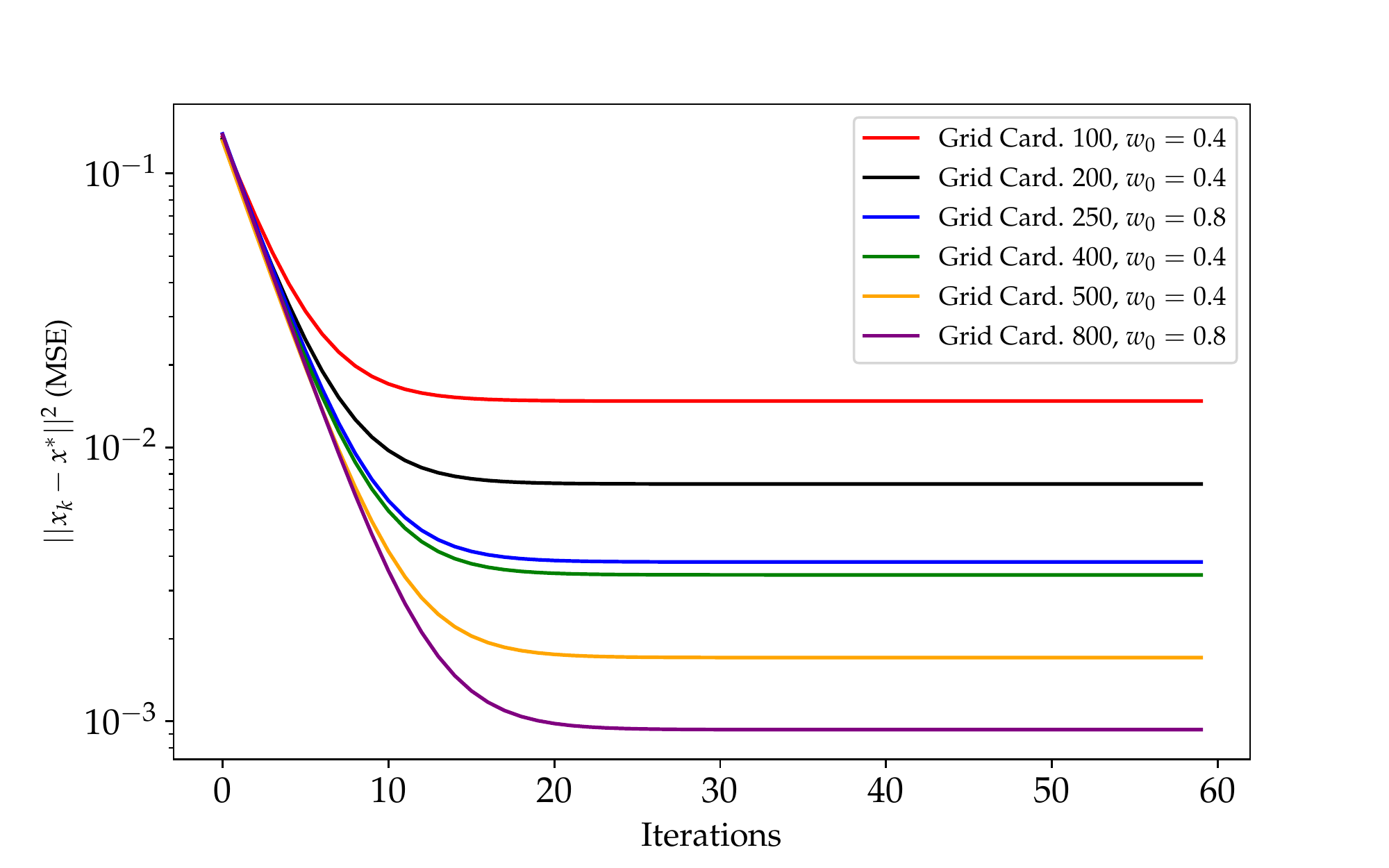}
    \caption{\textbf{Performance of LPI-GD for various graph cardinalities and initializations.} Minimum Squared Error (MSE) achieved by LPI-GD for different values of grid cardinality and initialization points. As the cardinality of the grid increases, a lower MSE is obtained for the same number of iterations. There exists a dependency on the initialization point, but its explicit characterization is not yet fully explained by our analysis.}
    \label{fig:Compare_Graph_Card}
\end{figure}
Figure~\ref{fig:Compare_Graph_Card} shows that there is a clear trend between increasing the graph cardinality and the accuracy of the solution. This is expected considering that a larger graph cardinality would correlate with a more accurate estimation of the full gradient. 
\par 
We then compare LPI-GD with SGD and GD. Our comparison uses the following hyperparameters for each method. For  LPI-GD, we use a graph cardinality of 500, initial point 0.4, and step-size 1.0. For SGD, we use a batch size of 500 and step-size 1.0. For GD, we use a step size of 1.0 as well. These may not be the optimal parameters, but they are the parameters we chose in order to have a ``fair" comparison. ``Fair" in this case means that we allow LPI-GD and SGD to make the same number of oracle calls per iterations and that we do not optimize the step-size for LPI-GD, SGD, and GD but rather use the same step-size which allows all the algorithms to converge well in practice. Figure~\ref{fig:Compare_GD_methods} shows that LPI-GD and SGD underperform with respect to GD in terms of the final accuracy and the convergence rate. However, we note that LPI-GD and SGD use half the oracle calls per iteration for our chosen parameters compared to GD. In addition, LPI-GD and SGD have a comparable error, with LPI-GD converging slightly faster.

\begin{figure}[t!]
\centering
\begin{subfigure}{\columnwidth}
\includegraphics[width=0.9\columnwidth]{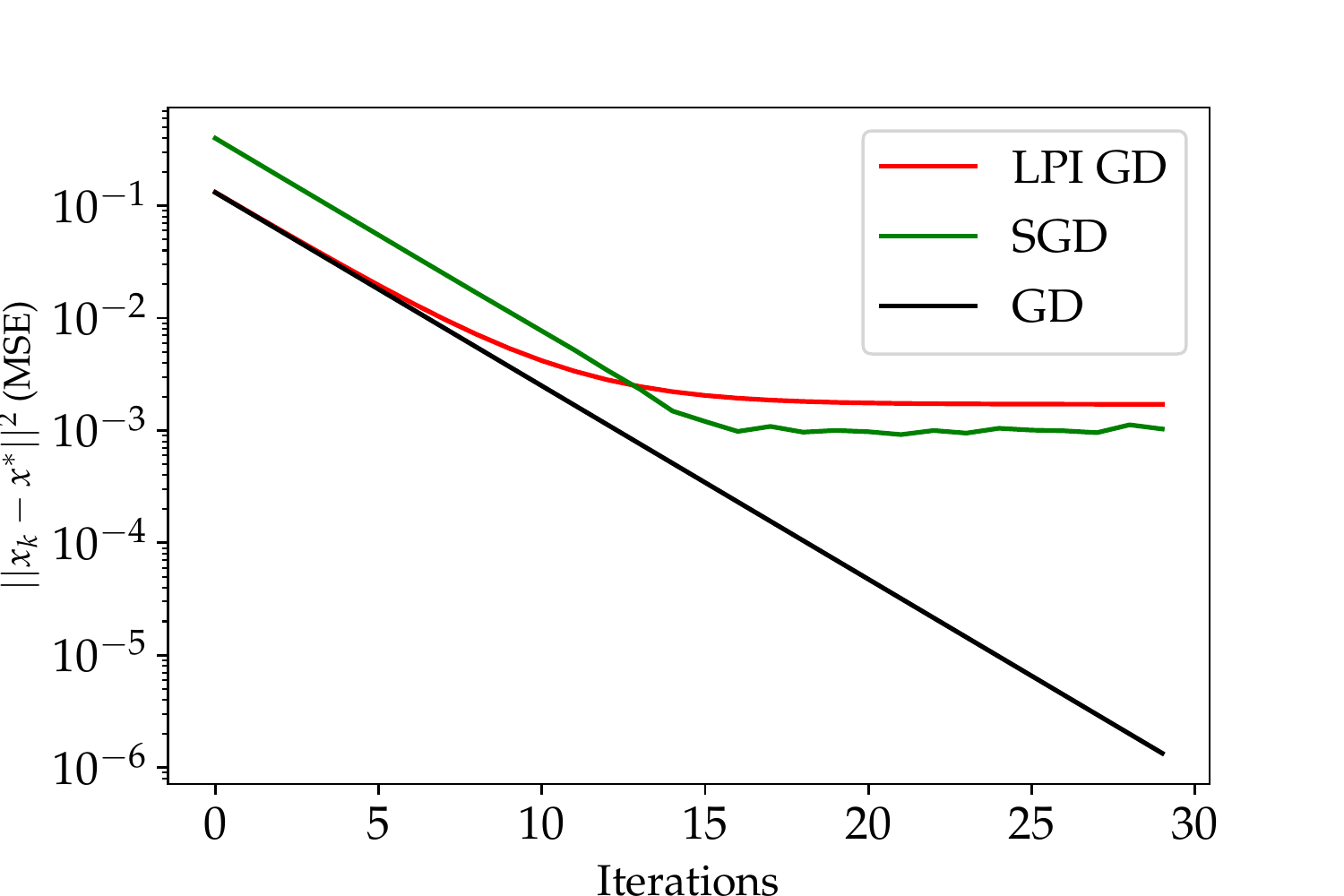}%
\caption{\textbf{Iteration Complexity of LPI-GD, SGD, and GD.} The Mean Square Error (MSE) of LPI-GD, SGD, and GD as the number of iterations increases. GD has linear convergence rates as predicted by the theory. SGD and LPI-GD have similar iteration complexity.}
\end{subfigure}\\
\begin{subfigure}{\columnwidth}
\includegraphics[width=0.9\columnwidth]{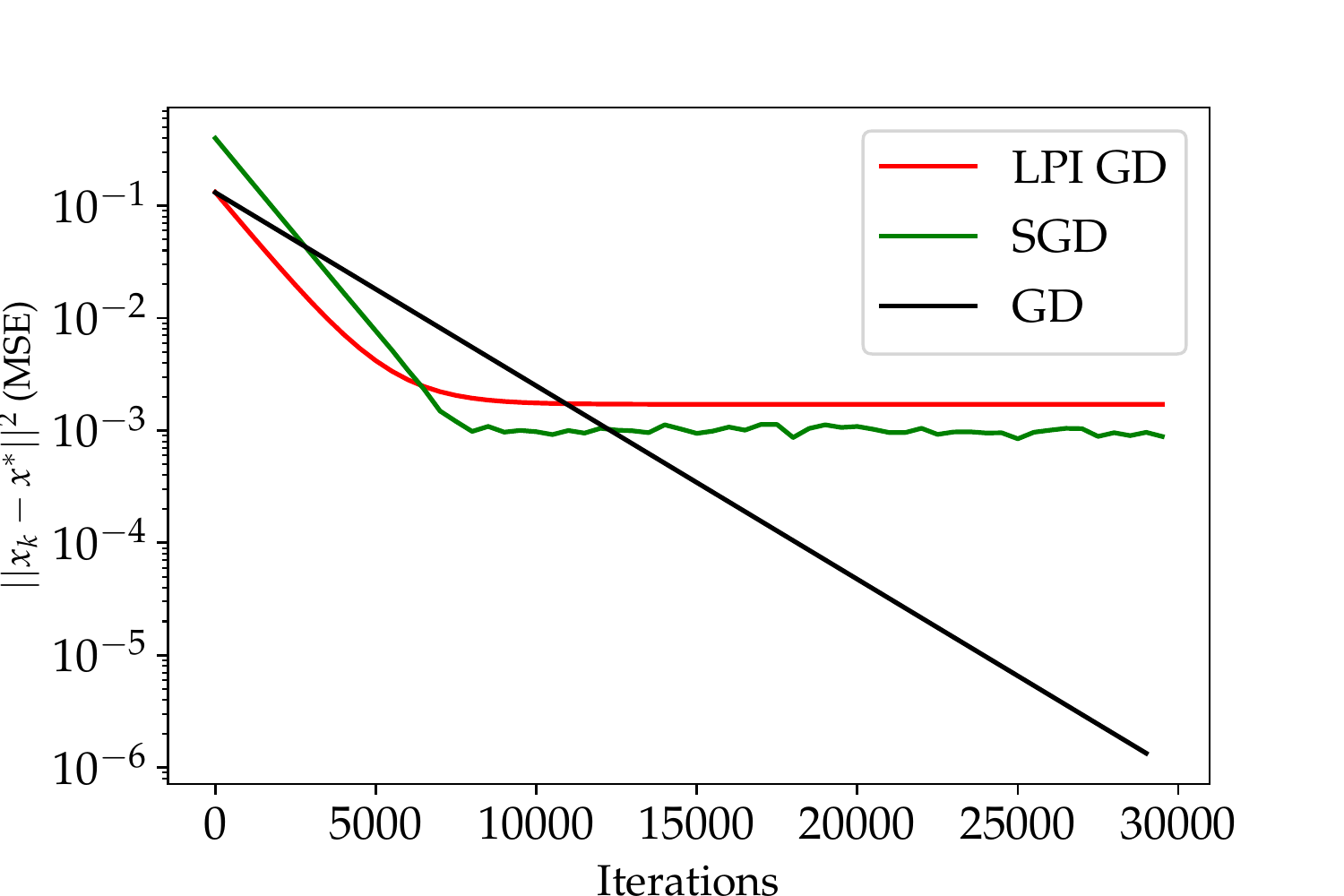}%
\caption{\textbf{Oracle Complexity of LPI-GD, SGD, and GD.} The Mean Square Error (MSE) of LPI-GD, SGD, and GD as the number of oracle calls increases. GD has linear convergence rates as predicted by the theory. SGD and LPI-GD have similar iteration complexity but both outperform GD in the initial iterations.}
\label{fig:sub3} \hfill
\end{subfigure}%
\caption{\textbf{Comparison of Iteration and Oracle complexities of LPI-GD, SGD, and GD.}}
\label{fig:Compare_GD_methods}
\end{figure}
\par
Finally, we will compare LPI-GD to Algorithm~\ref{alg:catalyst_acc}, and Algorithm \ref{alg:nesterov_acc}. We use the same hyperparameters for LPI-GD. For Algorithm~\ref{alg:catalyst_acc}, we use the same grid found for LPI-GD and a parameter $\beta = 0.99$. For Algorithm \ref{alg:nesterov_acc}, we use the same grid found for LPI-GD and a momentum parameter of 0.2. All of the initialization points are $w_0 = 0.4$. The results of this comparison can be seen in Figure \ref{fig:Compare_New_methods}. Unfortunately, Algorithm~\ref{alg:catalyst_acc} did not demonstrate the desired speedup. We hypothesize that the linear regression problem is not sufficiently sophisticated for us to see a speedup. However, Algorithm \ref{alg:nesterov_acc} has a faster convergence rate in comparison to LPI-GD. Despite the difference in convergence rates, all the compared algorithms converge to the same level of accuracy.

\begin{figure}[t!]
    \centering
    \includegraphics[width=0.9\columnwidth]{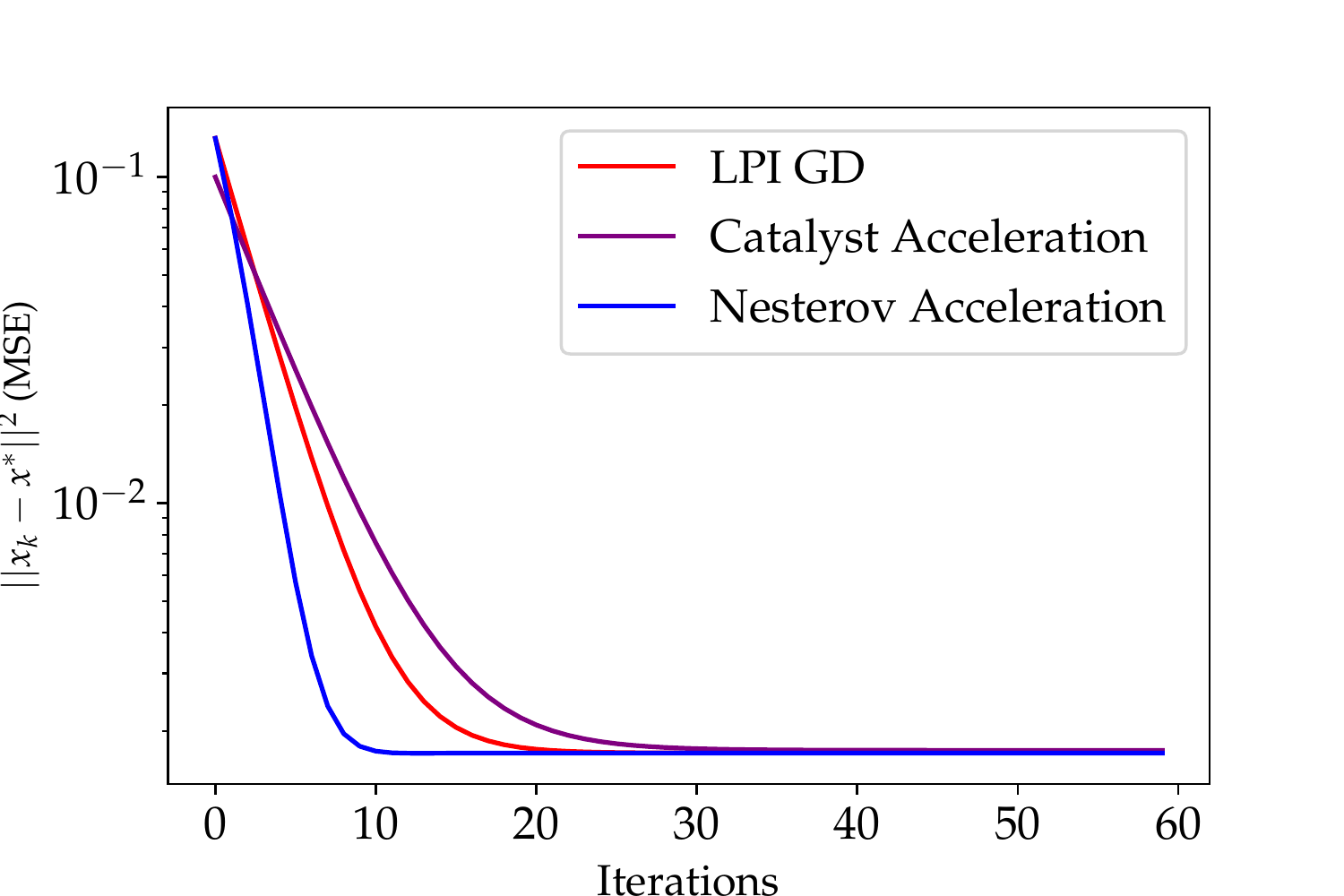}
    \caption{\textbf{Iteration Complexity of LPI-GD, Algorithm~\ref{alg:catalyst_acc}, and Algorithm~\ref{alg:nesterov_acc}.} Acceleration is achieved by Algorithm~\ref{alg:nesterov_acc} in comparison with LPI-GD. Algorithm~\ref{alg:catalyst_acc} does not show empirical evidence of better performance than GD for this particular problem class. }
    \label{fig:Compare_New_methods}
\end{figure}

\section{Discussion on Empirical Results}\label{sec:diss}

This section will highlight some key findings regarding the computational implementation of LPI-GD and its accelerated variants. The main overhead of the LPI-GD, Algorithm~\ref{alg:catalyst_acc}, and Algorithm \ref{alg:nesterov_acc} comes from calculating interpolation weights. There were multiple sources for the increased computational time in our implementation. One of the primary bottlenecks in the fact that we have to compute $\lvert x \rvert$ matrix inversions for matrices of dimension $\binom{l + d}{d} \times \binom{l + d}{d}$ where $\lvert x \rvert$ is the number of data points, $l$ is the order of the objective function minus $1$, and $d$ is the dimension of the data. Unfortunately, these matrix inversions are unavoidable as they are key components in calculating the interpolation weights. Another major source of the slowdown is that we could not determine a good method to vectorize weight computations. There should be a total of $y \times x$ weights. Our current implementation requires a total of $y\times x$ iterations to calculate the weights as we compute the weights individually. A procedure to calculate all the weights simultaneously could greatly improve the efficiency of future implementations. 
\par
Additionally, we observed that LPI-GD, Algorithm~\ref{alg:catalyst_acc}, and Algorithm \ref{alg:nesterov_acc} heavily rely on a good choice of initialization for fast convergence and that the choice of grid cardinality is integral to increasing the accuracy of the solutions. The choice of grid cardinality is essential as it is directly related to the accuracy of the approximation of the true gradient. However, increasing the grid cardinality beyond the number of data points defeats the original purpose of using polynomial interpolations. In that case, GD should be used instead, as we would be using a larger number of oracle calls per iteration than GD. Thus, it is crucial to determine the grid cardinality-accuracy trade-off to determine whether LPI methods should be used.

\section{Conclusions and Future Work} \label{sec:conclusion}

We proposed two acceleration approaches for the local polynomial interpolation-based gradient descent method LPI-GD for large-scale ERM problems. The first accelerated scheme is based on the Catalyst technique, and the second is based on fast methods with inexact oracles. We showed an improvement in the oracle complexities of the proposed methods with respect to the function condition number. We also provide the first empirical study of LPI-GD. While theoretical acceleration is shown for the two proposed schemes, only the inexact oracle generated observable faster convergence rates. We conjecture that Catalyst approaches would perform better for other function classes and parameter regimes.

Future empirical work would extend the complexity of problems we solve by choosing more complex problems and increasing the dimensions of our parameters and data. It is unclear how the accuracy of the gradient approximation is affected when the data dimension increases. To our knowledge, there is no existing library or methods that calculate the interpolation weights according to \cite{LPI_GD_alg} efficiently. Developing these methods efficiently should enable a broader study of LPI methods.

\section{Acknowledgements} \label{sec:ack}

The authors would like to thank Anuran Makur for introducing the authors to LPI methods and subsequent fruitful discussions.
	
\bibliographystyle{IEEEtran} 
\bibliography{ref}

\end{document}